\newcolumntype{V}{!{\vrule width 2pt}}
\numberwithin{equation}{section}
\def\blue{\textcolor{blue}}
\def\red{\textcolor{red}}
\theoremstyle{plain}
\newtheorem{theorem}{Theorem}[section]
\newtheorem{lemma}[theorem]{Lemma}
\def\S{\mathfrak{S}}
\def\N{\mathbb{N}}
\def\B{\mathfrak{B}}
\def\W{\mathfrak{W}}
\def\R{\mathfrak{R}}
\def\P{\mathcal{P}}
\def\Sk{\mathcal{S}}
\def\Mo{\mathcal{M}}
\def\L{\mathcal{L}}
\begin{document}
\title[Bijections around Springer numbers]{Bijections around Springer numbers}

\author[S. Chen]{Shaoshi Chen}
\address[Shaoshi Chen]{KLMM, Academy of Mathematics and Systems Science, Chinese Academy of Sciences, Beijing 100190, P.R. China}
\email{schen@amss.ac.cn}

\author[Y. Li]{Yang Li}
\address[Yang Li]{Research Center for Mathematics and Interdisciplinary Sciences, Shandong University, \& Frontiers Science Center for Nonlinear Expectations, Ministry of Education, Qingdao 266237, P.R. China}
\email{202421349@mail.sdu.edu.cn}

\author[Z. Lin]{Zhicong Lin}
\address[Zhicong Lin]{Research Center for Mathematics and Interdisciplinary Sciences,  Shandong University \& Frontiers Science Center for Nonlinear Expectations, Ministry of Education, Qingdao 266237, P.R. China}
\email{linz@sdu.edu.cn}

\author[S.H.F. Yan]{Sherry H.F. Yan}
\address[Sherry H.F.  Yan]{Department of Mathematics,
Zhejiang Normal University, Jinhua 321004, P.R. China}
\email{hfy@zjnu.cn}

\date{\today}

\begin{abstract}
Arnol'd proved in 1992 that Springer numbers enumerate the Snakes, which are  type $B$ analogs of alternating permutations.  Chen, Fan and Jia in 2011 introduced   the labeled ballot paths and established a ``hard'' bijection with  snakes. Callan conjectured in 2012 and Han--Kitaev--Zhang proved recently that rc-invariant alternating permutations are counted by Springer numbers. Very recently, Chen--Fang--Kitaev--Zhang investigated multi-dimensional permutations and proved that weakly increasing $3$-dimensional permutations are also counted by Springer numbers. In this work, we construct a sequence of ``natural'' bijections linking the above four combinatorial  objects.
\end{abstract}

\keywords{Springer numbers; Snakes; Alternating permutations; Labeled ballot paths}

\maketitle

\section{Introduction}

The {\em Euler numbers} $E_n$ are the coefficients of the  Taylor expansion:
$$
\tan(x)+\sec(x)=\sum_{n\geq0} E_n\frac{x^n}{n!}=1+x+1\frac{x^2}{2!}+2\frac{x^3}{3!}+5\frac{x^4}{4!}+16\frac{x^5}{5!}+61\frac{x^6}{6!}+\cdots.
$$
Let $\S_n$ denote the set of permutations of $[n]:=\{1,2,\ldots,n\}$. A permutation $\pi\in\S_n$ possessing  the down-up (or alternating) property
\begin{equation}\label{down:up}
\pi_1>\pi_2<\pi_3>\pi_4<\cdots
\end{equation}
is called an {\em alternating permutation}.
It is a classical result in Enumerative Combinatorics~\cite[Prop.~1.6.1]{EC1} that $E_n$ counts   alternating   permutations of length $n$.

The {\em Springer numbers} $S_n$ defined by
$$
\frac{1}{\cos(x)-\sin(x)}=\sum_{n\geq0} S_n\frac{x^n}{n!}=1+x+3\frac{x^2}{2!}+11\frac{x^3}{3!}+57\frac{x^4}{4!}+361\frac{x^5}{5!}+2763\frac{x^6}{6!}+\cdots
$$
can be considered as type $B$ analogs of the Euler numbers. Let $\B_n$ denote the set of {\em signed permutations} of length $n$, whose elements are those of the form $\pm\pi_1\pm\pi_2\cdots\pm\pi_n$ with $\pi_1\pi_2\cdots\pi_n\in\S_n$. For convenience, we write $-n$ by $\bar{n}$ for each positive integer $n$. A signed permutation $\pi\in\B_n$ satisfying $\pi_1>0$ and the down-up property~\eqref{down:up}
 is called a {\em snake} (of type $B_n$).  In his study of connections between combinatorics of the Coxeter groups and singularities of smooth functions, Arnol'd~\cite{Ar} proved that $S_n$ enumerates snakes of length $n$. For example, $S_3$ counts the following 11 snakes of length $3$
 $$
1 \bar23, 1\bar32, 1\bar3\bar2, 213, 2\bar13, 2\bar31, 2\bar3\bar1, 312, 3\bar12, 3\bar21, 3\bar2\bar1.
 $$

Besides snakes of length $n$, there are various interesting combinatorial interpretations for the Springer number $S_n$, among which are
\begin{itemize}
\item labeled ballot paths of $n$ steps, introduced and proved by Chen, Fan and Jia~\cite{CFJ};
\item rc-invariant alternating permutations of length $2n$, proved recently by Han, Kitaev and Zhang~\cite{HKZ}, verifying an observation by Callan in 2012;
\item weakly increasing $3$-dimensional permutations, found and proved very recently by Chen, Fang, Kitaev and Zhang~\cite{Chen0}.
\end{itemize}
 Chen, Fan and Jia~\cite{CFJ}  established a bijection between  the labeled ballot paths and     snakes, which is ``hard'' in the sense that the proof of its bijectivity is intricate. On the other hand, the proofs of the latter two assertions in~\cite{HKZ} and~\cite{Chen0} are not bijective. In this note, we construct a sequence of ``natural'' bijections linking the above four combinatorial  objects counted by Spring numbers (see Fig.~\ref{bij:spr}).

\begin{figure}
\centering
\begin{tikzpicture}[scale=0.3]

\node at (15,26.9) {Snakes};

\node at (3.1,16.6) {\small{Weakly increasing}};
\node at (3,15.1) {\small{3-D permutations}};

\draw[thick] (11,26) to (3,20);
\draw[thick] (11,26) to (10.5,25);
\draw[thick] (11,26) to (10,25.8);
\draw[thick] (3,20) to (4,20.2);
\draw[thick] (3,20) to (3.5,20.9);
\node at (6.2,23.3) {$\Phi$};

\draw[thick] (19,26) to (26,20);
\draw[thick] (19,26) to (20,25.7);
\draw[thick] (19,26) to (19.2,25.1);
\draw[thick] (26,20) to (25,20.2);
\draw[thick] (26,20) to (25.5,21);
\node at (23,23.5) {$\psi$};

\red{\node at (15,22.8) {Springer};
\node at (15,20.8) {Numbers};}

\draw[thick,red] (15,21.8) ellipse (5 and 2.5);
\draw[thick,dotted] (15,26.8) ellipse (3 and 1.5);
\draw[thick,dotted] (3,16) ellipse (5.8 and 2.7);
\draw[thick,dotted] (15,16) ellipse (4.2 and 2.2);
\node at (15,17) {Labeled};
\node at (15,15.4) {ballot paths};
\draw[thick,dotted] (29,16) ellipse (5.8 and 2.7);
\node at (29,17) {rc-invariant};
\node at (29,15.5) {alternating perms};
\node at (21,15.9) {$\longleftrightarrow$};
\node at (21,16.8) {$\Psi$};

\end{tikzpicture}
\caption{Bijections around Springer numbers\label{bij:spr}}
\end{figure}
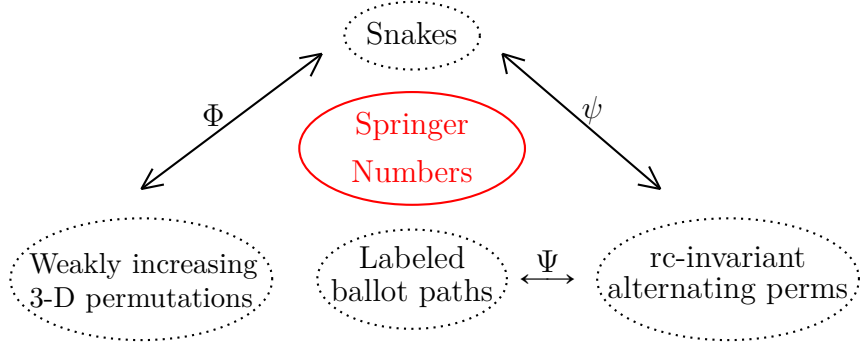

\section{Bijecting weakly increasing $3$-dimensional permutations with snakes }

 A pair $(\sigma,\pi)\in\S_n^2$ is called a  {\em weakly increasing $3$-dimensional permutation\footnote{ Since the pair $(\sigma,\pi)$ can be written as $3$-dimensional array
 $$
 \begin{bmatrix}
1 & 2& \cdots & n \\
\sigma_1 & \sigma_2 & \cdots& \sigma_n \\
\pi_1 & \pi_2 & \cdots& \pi_n
\end{bmatrix},
 $$
 it is referred to as $3$-dimensional permutation in~\cite{Chen0}.
 }} (3-WIP for short) in~\cite{Chen0} if
 $$
 \max(\sigma_1,\pi_1)\leq \max(\sigma_2,\pi_2)\leq \cdots\leq \max(\sigma_n,\pi_n).
 $$
Let $\W_n$ be the set of all 3-WIPs of length $n$. It is convenience to write $(\sigma,\pi)\in\W_n$ in 2-array as
 $$
 \begin{bmatrix}
\sigma_1 & \sigma_2 & \cdots& \sigma_n \\
\pi_1 & \pi_2 & \cdots& \pi_n
\end{bmatrix}.
$$
For example,
 \begin{equation}\label{exm1}
 \begin{bmatrix}
1&5&2&6&7&3&8&9&4 \\
2&5&6&3&1&7&8&4&9
\end{bmatrix}\in\W_9.
 \end{equation}

 For $\sigma\in\S_n$, denote by $\sigma^{-1}$ its inverse in $\S_n$. A letter $k$, $2\leq k\leq n$, is called a {\em cycle peak} of $\sigma$ if $\sigma^{-1}_k<k>\sigma_k$. For example, with $\sigma=2\,6\,7\,9\,5\,3\,1\,8\,4$ written in its cycle form as $(1,2,6,3,7)(5)(4,9)(8)$, its cycle peaks are $6$, $7$ and $9$.
 For our purpose, we need to recall Foata's ``transformation fondamentale'' $f:\S_n\to\S_n$ (see~\cite[Page~23]{EC1}), which is constructed in two steps:
\begin{itemize}
\item Write the permutation $\sigma$ in its {\it standard cycle form} by requiring:
(a) each cycle has its largest letter in the leftmost position; (b) the cycles are listed from left to right in increasing order of their largest letters.
\item Erasing all the parentheses results in the permutation $f(\sigma)$.
\end{itemize}
Continuing with the running example $\sigma$, its standard cycle form is $(5)(7,1,2,6,3)(8)(9,4)$ and we get
$o(\sigma)=5\,7\,1\,2\,6\,3\,8\,9\,4$ after erasing all the parentheses. It is plain to see that Foata's bijection $f$ transforms cycle peaks of $\sigma$ to  left peaks of $\pi=f(\sigma)$, where a letter $\pi_i$ of $\pi$ is a {\em left peak}  if $\pi_{i-1}<\pi_i>\pi_{i+1}$ with the convention $\pi_0=0$ and $\pi_{n+1}=+\infty$.

We are ready to construct the bijection $\Phi$ from $\W_n$ to $\Sk_n$, the set of snakes of length $n$. Given $(\sigma,\pi)\in\W_n$, we perform the following three steps to get the snake $\Phi(\sigma,\pi)$:
\begin{itemize}
\item[(1)] Construct a permutation $\tau$ such that $\tau(\sigma_i)=\pi_i$ for each $i$ with its cycle peak $k$ is hatted if and only if $k=\sigma_{\ell}=\pi_{\ell+1}$ for some $\ell$. Take $(\sigma,\pi)$ in~\eqref{exm1} as example, we have $\tau=(5)(\hat7,1,2,6,3)(8)(\hat9,4)$, written in its standard cycle form.
\item[(2)] Write $\tau$ in its standard cycle form and erase the parentheses to get $\widetilde\tau=f(\tau)$, which is a permutation in $\S_n$ with some of its left peaks hatted. Continuing with the running example, we have $\widetilde\tau=f(\tau)=5\,\hat7\,1\,2\,6\,3\,8\,\hat9\,4$.
\item[(3)] A letter $\widetilde\tau_i$ of $\widetilde\tau$ is a {\em right valley}  if $\widetilde\tau_{i-1}>\widetilde\tau_i<\widetilde\tau_{i+1}$ with the convention $\widetilde\tau_0=0$ and
$\widetilde\tau_{n+1}=+\infty$. We associate each right valley of $\widetilde\tau$  with the closest left peak  to its left in  $\widetilde\tau$. Now define the snake $\Phi(\sigma,\pi)$ from $\widetilde\tau$ by first removing hats and then putting bars over the letters in two situations: a non-right-valley in even position or a right valley whose associated left peak in $\widetilde\tau$ is hatted.  Continuing with the running example, we have $\Phi(\sigma,\pi)=5\,\bar7\,\bar1\,\bar2\,6\,3\,8\,\bar9\,\bar4$.
\end{itemize}

\begin{theorem}
The mapping  $\Phi: \W_n\rightarrow\Sk_n$  is a bijection.
\end{theorem}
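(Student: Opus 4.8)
The plan is to factor $\Phi$ according to its three construction steps and to show that each factor is a bijection between explicit sets. Let $\mathcal{C}_n$ denote the set of pairs $(\tau,H)$ where $\tau\in\S_n$ and $H$ is a subset of the cycle peaks of $\tau$, and let $\mathcal{D}_n$ denote the set of pairs $(\widetilde\tau,H')$ where $\widetilde\tau\in\S_n$ and $H'$ is a subset of the left peaks of $\widetilde\tau$. Step~(1) of the construction is a map $\Phi_1\colon\W_n\to\mathcal{C}_n$, step~(2) is Foata's bijection $f$, and step~(3) is a map $\Phi_3\colon\mathcal{D}_n\to\Sk_n$, so that $\Phi=\Phi_3\circ f\circ\Phi_1$; I would establish the theorem by proving each factor bijective.

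For $\Phi_1$, the key point is that the condition $\tau(\sigma_i)=\pi_i$ means $\tau=\pi\sigma^{-1}$, whence $\max(\sigma_i,\pi_i)=\max(\sigma_i,\tau(\sigma_i))=g(\sigma_i)$ with $g(a):=\max(a,\tau(a))$. Thus $(\sigma,\pi)\in\W_n$ says exactly that $\sigma$ lists $[n]$ in weakly increasing order of $g$. Since $g(a)=a$ when $\tau(a)\le a$ and $g(a)=\tau(a)$ when $\tau(a)>a$, the only ties $g(a)=g(b)$ with $a\ne b$ occur at the cycle peaks $k$, where $g(k)=g(\tau^{-1}(k))=k$. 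Hence an admissible $\sigma$ is obtained from the $g$-order by independently deciding, at each cycle peak $k$, the relative order of the adjacent tied pair $\{k,\tau^{-1}(k)\}$; placing $k$ before $\tau^{-1}(k)$ is precisely the event $k=\sigma_\ell=\pi_{\ell+1}$ that hats $k$. This exhibits $\Phi_1$ as a bijection onto $\mathcal{C}_n$, the inverse being ``sort by $g$, break ties according to $H$''. The factor $f$ is a bijection on $\S_n$ carrying the cycle-peak letters of $\tau$ onto the left-peak letters of $f(\tau)$, as recalled above, so it transports hats and restricts to a bijection $\mathcal{C}_n\to\mathcal{D}_n$.

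The substance is $\Phi_3$. I would first record a structural fact: extending $\widetilde\tau$ by $\widetilde\tau_0=0$ and $\widetilde\tau_{n+1}=+\infty$, the sequence begins and ends with an ascent, so its interior extrema alternate as left peak, right valley, left peak, right valley, $\ldots$, starting with a left peak and ending with a right valley. Consequently the rule assigning to each right valley the nearest left peak on its left is a bijection between right valleys and left peaks. The crux is then the lemma: for $v\in\S_n$ and any signing $w$ with $|w|=v$, one has $w\in\Sk_n$ if and only if at every non-right-valley position $i$ the sign of $w_i$ is $+$ when $i$ is odd and $-$ when $i$ is even (the signs at right valleys being free). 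Granting this, $\Phi_3$ is a bijection: via the association it converts a set $H'$ of hatted left peaks into a sign choice at the partner right valleys while fixing every other sign by parity, which by the lemma is exactly the data of a snake with underlying permutation $v$; conversely a snake determines $v=|w|$ and recovers $H'$ as the left peaks whose partner right valley is negative.

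The main obstacle is the lemma, which I would prove by a case analysis on the type of each position $i$ of $v$ (double ascent, double descent, or left peak) together with the parity of $i$. For the ``if'' direction I would verify the defining inequalities $w_i>w_{i+1}$ (for $i$ odd) and $w_i<w_{i+1}$ (for $i$ even) for every adjacent pair: splitting on whether $v_i<v_{i+1}$ or $v_i>v_{i+1}$ pins down the types of $i$ and $i+1$, and in each combination the parity-forced signs at non-right-valley endpoints give the inequality, while at a right-valley endpoint either sign works --- the positive one because the valley value is dominated by its neighbour, the negative one because the ambient ascent or descent of $v$ points the right way. For the ``only if'' direction I would assume a wrong sign at a non-right-valley position and contradict the snake inequality with one neighbour; for instance a double ascent at an even position with $w_i>0$ forces its left neighbour to be positive and smaller, contradicting that even positions are valleys of the snake. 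The endpoints $i=1$ (where $w_1>0$ is imposed, matching $+$ at an odd position) and $i=n$ (never a left peak or double descent) require only minor separate checks. With the lemma established, the three factors compose to show that $\Phi\colon\W_n\to\Sk_n$ is a bijection.
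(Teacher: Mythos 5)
Your proposal is correct, and its skeleton is necessarily the same as the paper's, since $\Phi$ is defined by the very three steps you factor through; the real difference is that you supply proofs at the two places where the paper is terse. The paper verifies that $\Phi(\sigma,\pi)$ is a snake by exactly your adjacent-pair case analysis (the ``if'' direction of your lemma), gives an explicit inverse only for step~(1) --- the 3-array with bottom row $c_i=\max(i,\tau_i)$, re-sorted with hat-controlled tie-breaking --- and dismisses the reversibility of steps~(2) and~(3) as ``plain to see.'' Your sort-by-$g$ argument, with $\tau=\pi\sigma^{-1}$ and $g(a)=\max(a,\tau(a))$, is the paper's 3-array inverse repackaged as a characterization: ties in $g$ occur exactly in adjacent pairs $\{k,\tau^{-1}(k)\}$ at cycle peaks $k$, so admissible $\sigma$'s are parametrized by subsets of cycle peaks, which gives injectivity and surjectivity of $\Phi_1$ simultaneously rather than just exhibiting an inverse algorithm. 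More substantially, your characterization lemma for step~(3) --- a signing $w$ of $v$ lies in $\Sk_n$ if and only if the signs at non-right-valley positions are forced by parity while the signs at right valleys are free --- is genuine added content: its ``only if'' direction is exactly the surjectivity of $\Phi$ onto $\Sk_n$ (every snake's sign pattern has the prescribed form), which the paper nowhere argues, and combined with the peak--valley alternation fact pairing each right valley with its nearest left peak it makes the reversibility of step~(3) rigorous. In short, the paper's version buys brevity and a concrete algorithm for $\Phi^{-1}$ on step~(1); yours buys a complete bijectivity proof, with the ``plain to see'' step replaced by a clean structural lemma.
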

\begin{proof}
We need to verify that $\Phi(\sigma,\pi)=p\in\B_n$ is a snake. As $\widetilde\tau_1$ is not a right valley,  $p_1>0$. We distinguish two cases.
\begin{itemize}
\item If $i$ is odd and $i<n$, then we need to show that $p_i>p_{i+1}$. If $\widetilde\tau_i<\widetilde\tau_{i+1}$, then $p_{i+1}=-\widetilde\tau_{i+1}$ and so $p_i>p_{i+1}$.   If $\widetilde\tau_i>\widetilde\tau_{i+1}$, then $p_{i}=\widetilde\tau_i$ is positive  and so $p_i>p_{i+1}$.

\item If $i$ is even and $i<n$, then we need to show that $p_i<p_{i+1}$. If $\widetilde\tau_i<\widetilde\tau_{i+1}$, then $p_{i+1}=\widetilde\tau_{i+1}$ and so $p_i<p_{i+1}$.   If $\widetilde\tau_i>\widetilde\tau_{i+1}$, then $p_{i}=-\widetilde\tau_i$  and so $p_i<p_{i+1}$.
\end{itemize}
It follows that $p$ is a snake and $\Phi$ is well-defined.

It is plain to see that steps (2) and (3) in the construction of $\Phi$ are  reversible. To see that step (1) is reversible, we define its inverse explicitly. Given $\tau\in\S_n$ with some of its cycle peaks hatted, form the 3-array
$$
T=
\begin{bmatrix}
1 & 2 & \cdots& n \\
\tau_1 & \tau_2 & \cdots& \tau_n\\
c_1 & c_2 & \cdots& c_n
\end{bmatrix},
$$
where $c_i=\max(i,\tau_i)$ for each $i$. Observe that $k$ appears twice in  the bottom row of $T$ iff $k$ is a cycle peak in $\tau$. Rearrange the columns of $T$ in weakly increasing order of their bottom values by requiring that whenever two columns $(j,\tau_j,c_j)$ and $(k,\tau_k,c_k)$ with $j<k$ and $c_j=c_k=k$, then put $(k,\tau_k,c_k)$ before $(j,\tau_j,c_j)$ iff $k$ is a cycle peak in $\tau$ with hat. Finally, removing the bottom row to retrieve the desired 3-WIP $(\sigma,\pi)$. This shows that step (1) is reversible and so $\Phi$ is a bijection.
\end{proof}

 \section{Bijecting snakes with labeled ballot paths}

 For $\pi\in\S_n$, the {\em complement} of $\pi$ is
 $$
 \pi^c:=(n+1-\pi_1)(n+1-\pi_2)\cdots(n+1-\pi_n),
 $$
 while the {\em reverse} of $\pi$ is
 $$
 \pi^r:=\pi_n\pi_{n-1}\cdots\pi_1.
 $$
A permutation $\pi$ is said to be {\em rc-invariant} if $\pi^{rc}=\pi$.  For instance, the permutation $\pi=41352$ is rc-invariant. The set of rc-invariant alternating permutations of length $2n$ is denoted by $\R_n$. This class of permutations was considered by Callan in the OEIS~\cite{oeis}, where he suspected that $|\R_n|=S_n$. His conjecture has been confirmed recently by Han, Kitaev and Zhang~\cite{HKZ}. With rc-invariant alternating permutations as intermediate structure, we construct a ``new'' bijection from snakes to labeled ballot paths.

First we construct a  bijection from $\Sk_n$ to $\R_n$, which is based on the following simple fact.
\begin{lemma}\label{lem:rc}
Given $\pi=\pi_1\pi_2\cdots\pi_{2n}\in\R_n$, then $\pi_n>n>\pi_{n+1}$ (resp.,~$\pi_n<n<\pi_{n+1}$) if $n$ is odd (resp.,~even).
\end{lemma}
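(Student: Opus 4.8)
The plan is to pin down the central pair $(\pi_n,\pi_{n+1})$ using two independent constraints and then combine them. \emph{First}, I would convert rc-invariance into a pointwise reflection identity. Spelling out $\pi^{rc}=\pi$ at the $i$-th position, where taking the complement replaces a value $v$ by $2n+1-v$ and taking the reverse sends position $i$ to $2n+1-i$, yields
\[
\pi_i = 2n+1-\pi_{2n+1-i}\qquad(1\le i\le 2n).
\]
Specializing to $i=n$, so that $2n+1-i=n+1$, gives the single relation
\[
\pi_n+\pi_{n+1}=2n+1
\]
that I will rely on. This is the only role played by rc-invariance: it forces the two middle entries to be complementary, hence distinct integers with odd sum.

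\emph{Second}, I would determine which of $\pi_n,\pi_{n+1}$ is the larger by reading off the down-up pattern~\eqref{down:up}. That pattern imposes a descent $\pi_i>\pi_{i+1}$ at every odd index $i$ and an ascent $\pi_i<\pi_{i+1}$ at every even index $i$; the comparison between the two central entries occurs exactly at index $i=n$. Hence $\pi_n>\pi_{n+1}$ when $n$ is odd and $\pi_n<\pi_{n+1}$ when $n$ is even.

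Combining the two ingredients is then a one-line computation. Substituting $\pi_{n+1}=2n+1-\pi_n$ into the parity-determined comparison, for $n$ odd the inequality $\pi_n>\pi_{n+1}$ becomes $2\pi_n>2n+1$, i.e.\ $\pi_n\ge n+1>n$, and hence $\pi_{n+1}=2n+1-\pi_n\le n$; for $n$ even the inequality reverses and gives $\pi_{n+1}\ge n+1>n$ with $\pi_n\le n$. This places $\pi_n$ and $\pi_{n+1}$ on opposite sides of the midpoint, which is the asserted separation. The derivation is short, so I do not expect a serious obstacle; the only point needing genuine care is orienting the comparison correctly, namely verifying that the central ascent/descent really sits at index $n$ and that the descent-versus-ascent parity bookkeeping is not off by one. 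I would double-check this orientation against a minimal instance such as $\pi=2143\in\R_2$ before writing it up.
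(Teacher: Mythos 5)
Your proposal is correct and follows the paper's own argument exactly: rc-invariance gives the pointwise identity $\pi_i+\pi_{2n+1-i}=2n+1$, which at $i=n$ forces $\pi_n+\pi_{n+1}=2n+1$, and the alternating condition orients the central comparison by the parity of $n$. You simply spell out the one-line computation that the paper leaves implicit, and your parity bookkeeping (descent at odd indices, ascent at even) is right, as your check against $\pi=2143$ confirms.
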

\begin{proof}
Since $\pi$ is rc-invariant, we have $\pi_i+\pi_{2n+1-i}=2n+1$. The result then follows from the fact that $\pi$ is alternating.
\end{proof}
\begin{theorem}\label{thm:psi}
There exists a bijection $\psi: \Sk_n\rightarrow\R_n$.
\end{theorem}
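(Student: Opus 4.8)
The plan is to realize every element of $\R_n$ through its first half and then transport snakes onto these first halves by an order-preserving relabelling. Since rc-invariance forces $\pi_i+\pi_{2n+1-i}=2n+1$, a permutation $\pi\in\R_n$ is completely determined by the block $\pi_1\pi_2\cdots\pi_n$, so I would first prove the converse characterization: a word $\pi_1\cdots\pi_n$ is the first half of some $\pi\in\R_n$ if and only if it is down-up, it selects exactly one entry from each complementary pair $\{j,2n+1-j\}$, and its last letter $\pi_n$ lies on the side of the midpoint prescribed by Lemma~\ref{lem:rc}. The only point needing checking here is that reflecting the block across the centre really produces an alternating word: a direct computation gives $\pi_{n+j}-\pi_{n+j+1}=\pi_{n-j}-\pi_{n+1-j}$, so the comparison at position $n+j$ agrees with that at position $n-j$ (a position of the same parity), whence the second half inherits the alternation of the first, and the single new comparison, the junction $\pi_n$ versus $\pi_{n+1}=2n+1-\pi_n$, is exactly what Lemma~\ref{lem:rc} controls.

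Next I would introduce the order-preserving bijection $\iota:\{\pm1,\dots,\pm n\}\to[2n]$ given by $\iota(k)=n+k$ and $\iota(-k)=n+1-k$ for $1\le k\le n$. Its virtue is that the complementary pairs $\{j,2n+1-j\}$ pull back precisely to the sign pairs $\{+m,-m\}$, so applying $\iota$ entrywise carries a signed permutation to a word selecting one letter from each complementary pair, and because $\iota$ is increasing it sends down-up signed words to down-up words. Thus for a snake $p$ the word $\iota(p_1)\cdots\iota(p_n)$ automatically meets the first two requirements above; the only remaining condition is the junction, which under $\iota$ becomes a constraint on the sign of the \emph{last} entry $p_n$ (positive for $n$ odd, negative for $n$ even).

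This is where I expect the main obstacle to lie: a snake constrains its first letter ($p_1>0$), while the junction constrains the last letter, so $\iota$ alone does not land inside $\R_n$ for those snakes whose final sign disagrees with the parity. I would bridge the two ends by the parity-twisted reversal $R(p)_i=(-1)^{n+1}p_{n+1-i}$ (plain reversal when $n$ is odd, reverse-and-negate when $n$ is even), which I would check is an involution that preserves the down-up shape and which, since $p_1>0$, always yields a word whose last letter $R(p)_n=(-1)^{n+1}p_1$ carries the sign demanded by the junction. I then define $\psi(p)$ to be the element of $\R_n$ whose first half is $\iota(p_1)\cdots\iota(p_n)$ when $p_n$ already has the correct sign, and $\iota(R(p)_1)\cdots\iota(R(p)_n)$ otherwise.

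Finally I would prove bijectivity by splitting both sides according to a single bit. On the snake side, separate those $p$ whose last sign matches the junction from those for which it does not; on the $\R_n$ side, separate first halves with a large first letter from those with a small first letter. The first class maps by $\iota$ and the second by $\iota\circ R$, and in each case the map is an order-preserving relabelling composed with an involution, hence invertible; checking that $R$ interchanges the two snake classes while sending first-positive to first-negative words then shows that the two partial maps assemble into a single bijection $\psi:\Sk_n\to\R_n$. I expect the delicate part to be precisely the sign bookkeeping in this last step, namely verifying that $R$ preserves alternation and swaps the distinguished bit correctly in both parities, rather than anything in the construction of $\iota$ itself.
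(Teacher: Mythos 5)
Your proposal is correct, but it is not the paper's argument: you construct a genuinely different bijection and certify it differently. Your relabelling $\iota$ is exactly the paper's $\tilde\pi$ (it sends $k\mapsto n+k$ and $-k\mapsto n+1-k$), and both proofs hinge on Lemma~\ref{lem:rc} to control the junction $\pi_n$ versus $\pi_{n+1}=2n+1-\pi_n$. The difference is what happens next. The paper never splits into cases: for $n$ odd it places $\tilde\pi^r$ as the \emph{first} half, and for $n$ even it places $\tilde\pi$ as the \emph{second} half, so that in both parities the letter $\tilde\pi_1>n$ (guaranteed by $p_1>0$) lands adjacent to the junction; in your notation, the paper's first half is always $\iota(R(p))$, with your parity-twisted reversal applied \emph{unconditionally}. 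This yields a uniform formula whose inverse is immediate (read off the appropriate half, undo $\iota$), with no class-matching argument at all. Your version instead keeps $\iota(p)$ untouched when the last sign of $p$ agrees with the junction and applies $R$ only as a repair otherwise, which forces the two-class bookkeeping you rightly flag as the delicate step — and your checks there do go through: class A snakes hit exactly the admissible first halves with $w_1>n$ and class B exactly those with $w_1\le n$, because $R(p)_n=(-1)^{n+1}p_1$ always carries the junction sign while $R(p)_1=(-1)^{n+1}p_n$ is forced below the midpoint in class B. (One phrase to tighten: $R$ does not literally ``interchange the two snake classes,'' since $R$ of a class-B snake has negative first letter and is not a snake; what your argument actually uses, and what is true, is that $R\circ\iota^{-1}$ carries admissible first halves with small first letter to class-B snakes.) What your route buys is an explicit characterization of the first halves of $\R_n$ (down-up, one entry per complementary pair, junction condition), which the paper leaves implicit; what the paper's placement trick buys is a case-free definition whose bijectivity is visible at a glance. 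Note the two maps genuinely differ: for $n=3$ and $p=213$ you produce $546132$ while the paper produces $645231$ — both in $\R_3$, and both legitimate, since the theorem only asserts existence of a bijection.
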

\begin{proof}
Given a snake $\pi\in\Sk_n$, define $\tilde\pi=\tilde\pi_1\cdots\tilde\pi_n$ with
$$
\tilde\pi_i:= \begin{cases}
n+\pi_i,\quad&\text{if $\pi_i>0$};\\
n+1+\pi_i, \quad&\text{if $\pi_i<0$}.
\end{cases}
$$
Note that $\tilde\pi_k+\tilde\pi_l\neq2n+1$ for any $k\neq l$. Thus, we can define $\psi(\pi)$ to be the unique rc-invariant permutation whose first half (resp.,~later half) is $\tilde\pi^r$ (resp.,~$\tilde\pi$) if $n$ is odd (resp.,~even). It is clear that $\tilde\pi$ is alternating. As $\pi_1>0$, $\tilde\pi_1>n$ and so $\psi(\pi)$ is alternating by Lemma~\ref{lem:rc}. For example, if $\pi=215\bar4\bar3$, then $\tilde\pi=7\,6\,10\,2\,3$ and $\psi(\pi)=\red{3\,2\,10\,6\,7}\,4\,5\,1\,9\,8$; if $\pi=1\bar5\bar3\bar62\bar4$, then $\tilde\pi=7\,2\,4\,1\,8\,3$ and $\psi(\pi)=10\,5\,12\,9\,11\,6\,\red{7\,2\,4\,1\,8\,3}$.

It is plain to see that $\psi$ is reversible and so is a bijection.
\end{proof}

A {\em partial Motzkin path}  is a lattice path in the quarter plane $\N^2$ starting at $(0,0)$ and using three possible steps:
$$
\text{$(1,1)=U$ (up step), $(1,-1)=D$ (down step)\,\,\,and\,\,\,$(1,0)=H$ (horizontal step).}
$$
For a partial Motzkin path $P=p_1p_2\cdots p_n$ with $n$ steps, let $h_i(P)$ be the {\em height} of the $i$-th step of $P$:
$$
h_i(P):=|\{j\mid j<i, p_j=U\}|-|\{j\mid j<i, p_j=D\}|.
$$
A partial Motzkin path without horizontal steps is called a {\em ballot path}. As introduced in~\cite{CFJ}, a {\em labeled ballot path} of length $n$ is a pair $(P,w)$, where $P=p_1p_2\cdots p_n$ is a  ballot path of $n$ steps and $w=w_1w_2\cdots w_n\in\N^n$ is a weight function satisfying
$$
0\leq w_i\leq \begin{cases}
h_i(P),\quad&\text{if $p_i=U$};\\
h_i(P)-1, \quad&\text{if $p_i=D$}.
\end{cases}
$$
 Let $\P_n$ be the set of labeled ballot paths of length $n$.

Partial Motzkin paths ending at $x$-axis are the usual Motzkin paths.
A {\em two-colored Motzkin path} is  a Motzkin path whose  horizontal steps are colored by $\red{H}$ or $\blue{\tilde H}$. Denote by  $\Mo_n^{(2)}$ the set of all two-colored Motzkin paths of length $n$.
A {\em restricted Laguerre history} of length $n$ is a pair $(M,w)$, where $M=m_1m_2\cdots m_n\in\Mo_n^{(2)}$ and $w=w_1w_2\cdots w_n\in\N^n$ is a weight function satisfying
$$
0\leq w_i\leq \begin{cases}
h_i(M),\quad&\text{if $m_i=U, \red{H}$};\\
h_i(M)-1, \quad&\text{if $m_i=D, \blue{\tilde H}$}.
\end{cases}
$$
Let $\L_n$ denote the set of all restricted Laguerre histories of length $n$. In the following, we show that
the classical Foata--Zeilberger bijection~\cite{FZ} (see also~\cite{Cor}) $\Psi_{FZ}$ from $\S_n$ to $\L_n$ restricted to a bijection from $\R_n$ to $\P_n$.

We need to recall the Foata--Zeilberger bijection $\Psi_{FZ}$.
Given a permutation $\pi\in\S_n$, we use the convention
   $\pi_0=0$ and $\pi_{n+1}=+\infty$. For each $i\in[n]$, let $(\underline{31}2)_i(\pi)$ be the number of $\underline{31}2$-patterns in $\pi$ with $i$ representing the $2$, i.e.,
   $$(\underline{31}2)_i(\pi):=|\{k: k<j\text{ and } \pi_k<\pi_j=i<\pi_{k-1}\}|.$$
Define $\Psi_{FZ}(\pi)=(M,w)$, where for  $i\in[n]$ with $\pi_j=i$:
$$
  m_i=\left\{
  \begin{array}{ll}
  U&\quad\mbox{if $\pi_{j-1}>\pi_j<\pi_{j+1}$},  \\
 D&\quad\mbox{if $\pi_{j-1}<\pi_j>\pi_{j+1}$},  \\
  \red{H} &\quad\mbox{if $\pi_{j-1}<\pi_j<\pi_{j+1}$},\\
  \blue{\tilde{H}}&\quad\mbox{if $\pi_{j-1}>\pi_j>\pi_{j+1}$},
  \end{array}
  \right.
  $$
  and $w_i=(\underline{31}2)_i(\pi)$.
For example, if $\pi=431296857\in\S_9$, then
$$\Psi_{FZ}(\pi)=(U\red{H}\blue{\tilde H}DUU\red{H}DD,010000210).$$

The inverse algorithm $\Psi_{FZ}^{-1}$ building a permutation $\pi$ (in $n$ steps) from a Laguerre history $(M,w)\in\L_{n}$ can be described iteratively as:
\begin{itemize}
\item Initialization: $\pi=\diamond$;
\item At the $i$-th ($1\leq i\leq n$) step of the algorithm, replace the $(w_i+1)$-th $\diamond$ (from left to right) of $\pi$ by
$$
\begin{cases}
\,\diamond i\diamond&\quad \text{if $m_i=U$},\\
\, i\diamond&\quad \text{if $m_i=\red{H}$},\\
 \, i& \quad\text{if $m_i=D$},\\
 \, \diamond i&\quad \text{if $m_i=\blue{\tilde H}$};
\end{cases}
$$
\item The final permutation is obtained by removing  the last remaining $\diamond$.
\end{itemize}
For example, if $(M,w)=(U\red{H}\blue{\tilde H}DUU\red{H}DD,010000210)\in\L_9$, then
\begin{align*}
\pi&=\diamond\rightarrow \diamond1\diamond\rightarrow\diamond12\diamond\rightarrow \diamond312\diamond\rightarrow 4312\diamond\rightarrow 4312\diamond5 \diamond\\
&\quad\rightarrow 4312\diamond6\diamond5\diamond\rightarrow 4312\diamond6\diamond57\diamond\rightarrow 4312\diamond6857\diamond\rightarrow 431296857.
\end{align*}

From the inverse algorithm $\Psi_{FZ}^{-1}$,  one could check easily the following lemma that was known in~\cite{Cor}.

\begin{lemma}\label{lem:cor}
Suppose that $(M,w)\in\L_n$ and $\pi=\Psi_{FZ}^{-1}(M,w)$. Then for any $1\leq i\leq n$,
$$
(\underline{31}2)_i(\pi)+(2\underline{31})_i(\pi)=
\begin{cases}
h_i(M), &\quad\text{if $m_i=U, \red{H}$};\\
h_i(M)-1, &\quad\text{if $m_i=D, \blue{\tilde H}$}.
\end{cases}
$$
Here $(2\underline{31})_i(\pi)$ denotes the number of $2\underline{31}$-patterns in $\pi$ with $i$ representing the $2$, i.e.,
$$
(2\underline{31})_i(\pi)=|\{k: k>j\text{ and } \pi_{k+1}<\pi_j=i<\pi_{k}\}|.
$$
\end{lemma}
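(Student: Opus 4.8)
The plan is to run the inverse algorithm $\Psi_{FZ}^{-1}$ and read off both sides of the identity from the evolving word of letters and diamonds $\diamond$. First I would reformulate the left-hand side. Since the letter $i$ sits \emph{between} the two pattern types and is itself neither $<i$ nor $>i$, no descent counted on the left can use the position of $i$; hence $(\underline{31}2)_i(\pi)+(2\underline{31})_i(\pi)$ equals the total number $D_i$ of descents of $\pi$ with $\pi_{\ell+1}<i<\pi_\ell$, the first count collecting those to the left of $i$ and the second those to the right. Equivalently, $D_i$ is the number of letters $v<i$ whose left neighbour in $\pi$ exceeds $i$, the correspondence sending a straddling descent to its (bottom) letter $\pi_{\ell+1}$.

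Next I would record two structural invariants of the words $\pi^{(i)}$ obtained after inserting $1,\dots,i$, where $\pi^{(0)}=\diamond$. \emph{(i) No two diamonds are ever adjacent}: each of the four replacement rules keeps a letter between any two diamonds, so this follows by induction. \emph{(ii) Every $\pi^{(i)}$ ends in a diamond}: the rules $U,\red{H}$ replace a diamond by a word again ending in a diamond, while for $m_i\in\{D,\blue{\tilde H}\}$ the defining constraint $w_i\le h_i(M)-1$ forces the target $(w_i{+}1)$-th diamond to lie strictly left of the rightmost one, so the trailing diamond is never touched. Since a two-colored Motzkin path returns to $0$, exactly one diamond survives to be removed at the end, and by (ii) it is the trailing one.

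The heart of the argument is the identity $D_i=a_i$, where $a_i$ denotes the number of diamonds of $\pi^{(i)}$ immediately followed by a letter $v<i$. For the forward inclusion, if $v<i$ has left neighbour $>i$ in $\pi$, that neighbour is inserted after step $i$, so at stage $i$ the symbol immediately left of $v$ is a diamond. Conversely, a diamond lying just left of some $v<i$ in $\pi^{(i)}$ can thereafter only be filled by letters exceeding $i$; tracing the rules, such a diamond is repeatedly ``refreshed'' under $U,\red H$ and eventually ``closed'' under $D,\blue{\tilde H}$, and by invariant (ii) a diamond adjacent to the letter $v$ can never be the surviving trailing diamond, so $v$ indeed acquires a left neighbour $>i$. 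This matches straddling descents with such diamonds bijectively.

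Finally I would count the diamonds of $\pi^{(i)}$ two ways. Their total is $1+h_i(M)+\delta_i$, where $\delta_i=[m_i=U]-[m_i=D]$ is the net change at step $i$. On the other hand, by invariant (i) every diamond is either followed by a letter or is the trailing one, so the diamonds split into the $a_i$ followed by a letter $<i$, the unique trailing diamond, and the at most one diamond immediately left of $i$ itself, present exactly when $m_i\in\{U,\blue{\tilde H}\}$. Equating gives $a_i=h_i(M)+\delta_i-[m_i\in\{U,\blue{\tilde H}\}]$, which evaluates to $h_i(M)$ for $m_i\in\{U,\red H\}$ and to $h_i(M)-1$ for $m_i\in\{D,\blue{\tilde H}\}$. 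Together with $D_i=a_i$ and the reformulation of the left-hand side, this is exactly the claim. I expect the main obstacle to be the middle step $D_i=a_i$: making precise that every ``active'' diamond really closes into a genuine large-over-small adjacency and that the lone surviving diamond never corrupts the count. This is precisely where invariant (ii), and hence the weight bound $w_i\le h_i(M)-1$ for the $D,\blue{\tilde H}$ steps, is indispensable.
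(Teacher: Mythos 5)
Your argument is correct and takes exactly the route the paper indicates: the paper offers no written proof of Lemma~\ref{lem:cor}, stating only that it ``could be checked easily'' from the inverse algorithm $\Psi_{FZ}^{-1}$ and citing~\cite{Cor}. Your diamond-counting verification (the reformulation of the left-hand side as straddling descents, the two invariants, the identity $D_i=a_i$, and the final count $1+h_i(M)+\delta_i$ of diamonds, with the weight bound $w_i\leq h_i(M)-1$ ensuring the trailing diamond is never consumed by a $D$ or $\blue{\tilde H}$ step) supplies precisely the omitted details and is sound.
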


Given $(M,w)\in\L_n$, let $(M,w)^{rc}=(M',w')$, where
$$
m'_{n+1-i}=\begin{cases}
 m_i \qquad\quad&\text{if $m_i=\red{H},\blue{\tilde H}$},\\
U \qquad&\text{if $m_i=D$},\\
 D \qquad&\text{if $m_i=U$};
\end{cases}
$$
and
$$
w'_{n+1-i}=
\begin{cases}
h_i(M)-w_i, &\quad\text{if $m_i=U, \red{H}$};\\
h_i(M)-1-w_i, &\quad\text{if $m_i=D, \blue{\tilde H}$}.
\end{cases}
$$
\begin{lemma}\label{lem:rcfz}
For any $\pi\in\S_n$, if $\Psi_{FZ}(\pi)=(M,w)$, then $\Psi_{FZ}(\pi^{rc})=(M,w)^{rc}$.
\end{lemma}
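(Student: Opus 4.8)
The plan is to set $\sigma=\pi^{rc}$ and track how the data $(m_i,w_i)$ defining $\Psi_{FZ}(\pi)$ transform under $rc$. The basic observation is the position--value duality $\sigma_{n+1-j}=n+1-\pi_j$: the value $i$ occupying position $j$ in $\pi$ becomes the value $n+1-i$ occupying position $n+1-j$ in $\sigma$. Writing $i'=n+1-i$ and $j'=n+1-j$ we have $\sigma_{j'}=i'$, together with $\sigma_{j'-1}=n+1-\pi_{j+1}$ and $\sigma_{j'+1}=n+1-\pi_{j-1}$. Since $m_i$ (resp.\ the step of $\Psi_{FZ}(\sigma)$ at $i'$) records the local shape of $\pi$ at $i$ (resp.\ of $\sigma$ at $i'$), and $w_i=(\underline{31}2)_i(\pi)$, it suffices to prove two things: (i) the step type of $i'$ in $\sigma$ is the $rc$-image of the step type of $i$ in $\pi$, namely $U\leftrightarrow D$ while $\red{H}$ and $\blue{\tilde H}$ are each fixed; and (ii) the weight $(\underline{31}2)_{i'}(\sigma)$ that $\Psi_{FZ}(\sigma)$ assigns to $i'$ equals the prescribed value $w'_{n+1-i}$.

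For (i) I would compare $\sigma_{j'}=i'$ with its two neighbours. Because $\sigma_{j'-1}=n+1-\pi_{j+1}$ and $\sigma_{j'+1}=n+1-\pi_{j-1}$, passing from $\pi$ to $\sigma$ both swaps the roles of the left and right neighbours and reverses every inequality; running through the four cases then shows that a valley ($U$) becomes a peak ($D$) and vice versa, while a double ascent ($\red{H}$) and a double descent ($\blue{\tilde H}$) are preserved, exactly matching the definition of $(M,w)^{rc}$. The only delicate point is at the endpoints $j=1$ and $j=n$, where the conventions $\pi_0=0$ and $\pi_{n+1}=+\infty$ enter; there one checks that $i=\pi_1$, adjacent to the ``low'' boundary $0$ in $\pi$, is sent to $i'$ adjacent to the ``high'' boundary $+\infty$ of $\sigma$ (and symmetrically for $j=n$), so that the inequality bookkeeping of the four-case analysis carries over without change.

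For (ii), the key simplification comes from Lemma~\ref{lem:cor}. In the case $m_i\in\{U,\red{H}\}$ the prescribed weight is $h_i(M)-w_i$, and in the case $m_i\in\{D,\blue{\tilde H}\}$ it is $h_i(M)-1-w_i$; by Lemma~\ref{lem:cor} both of these expressions collapse to the \emph{single} statistic $(2\underline{31})_i(\pi)$, since $(\underline{31}2)_i(\pi)+(2\underline{31})_i(\pi)$ equals $h_i(M)$ or $h_i(M)-1$ in the respective cases. It therefore remains to prove the clean identity $(\underline{31}2)_{i'}(\sigma)=(2\underline{31})_i(\pi)$. This I would establish by the change of index $k'=n+1-k$ in the defining set for $(\underline{31}2)_{i'}(\sigma)$: under $\sigma_{k'}=n+1-\pi_k$ and $\sigma_{k'-1}=n+1-\pi_{k+1}$ the constraint $k'<j'$ becomes $k>j$, and the sandwich $\sigma_{k'}<i'<\sigma_{k'-1}$ reverses to $\pi_{k+1}<i<\pi_k$, which is precisely the defining condition of $(2\underline{31})_i(\pi)$. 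Combining (i) and (ii) yields $\Psi_{FZ}(\pi^{rc})=(M,w)^{rc}$. I expect the main obstacle to be purely a matter of bookkeeping at the margins: ensuring that the boundary conventions in (i), and the degenerate terms $k=n$ (equivalently $k'=1$, where $\pi_{n+1}=+\infty$ and $\sigma_0=0$ both force exclusion) in (ii), are handled so that no off-by-one discrepancy appears. Once Lemma~\ref{lem:cor} is invoked to absorb the height term, the remaining content is just the careful reversal of inequalities.
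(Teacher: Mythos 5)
Your proof is correct and takes essentially the same route as the paper: the paper's proof consists precisely of the identity $(\underline{31}2)_{n+1-i}(\pi^{rc})=(2\underline{31})_i(\pi)$ (your step (ii), obtained by the same index reversal) combined with the construction of $\Psi_{FZ}$ (your step-type analysis (i)) and Lemma~\ref{lem:cor} to absorb the height terms. You have simply written out the four-case local-shape check and the boundary bookkeeping that the paper leaves implicit.
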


\begin{proof}
As $(\underline{31}2)_{n+1-i}(\pi^{rc})=(2\underline{31})_i(\pi)$,
 the result then follows from the construction of $\Psi_{FZ}$ and Lemma~\ref{lem:cor}.
\end{proof}

For any $\pi\in\R_n$, since $\pi$ is rc-invariant and alternating, it follows from Lemma~\ref{lem:rcfz} that $\Psi_{FZ}(\pi)=(D,w)$, where $D$ is a Dyck path (i.e., ballot path ending at $x$-axis) and $(D,w)^{rc}=(D,w)$. If we denote $\Psi(\pi)$ the labeled ballot path by keeping the first half of $(D,w)$, then $\Psi$
establishes a bijection between $\R_n$ and $\P_n$. Composing $\psi$ in Theorem~\ref{thm:psi} with $\Psi$ leads to the following result.

\begin{theorem}\label{thm:chen}
The functional composition $\Psi\circ\psi$ establishes a bijection between $\Sk_n$ and $\P_n$.
\end{theorem}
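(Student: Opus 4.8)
The plan is to reduce Theorem~\ref{thm:chen} to the single assertion that $\Psi\colon\R_n\to\P_n$ is a bijection. Once this is in hand the theorem follows at once: by Theorem~\ref{thm:psi} the map $\psi\colon\Sk_n\to\R_n$ is already a bijection, and a composition of bijections is a bijection, so $\Psi\circ\psi\colon\Sk_n\to\P_n$ is bijective. Thus I concentrate entirely on $\Psi$, which sends $\pi\in\R_n$ to the first half of $\Psi_{FZ}(\pi)$.

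First I would check that $\Psi$ is well defined. Let $\pi\in\R_n$ and write $\Psi_{FZ}(\pi)=(M,w)$, a restricted Laguerre history of length $2n$. Because $\pi$ is alternating, under the conventions $\pi_0=0$ and $\pi_{2n+1}=+\infty$ every position is a strict peak or a strict valley, so no value of $\pi$ sits at a double ascent or a double descent; by the definition of $\Psi_{FZ}$ this forces $M$ to have no horizontal steps, i.e.\ $M=D$ is a Dyck path of length $2n$. Next, since $\pi$ is rc-invariant we have $\pi^{rc}=\pi$, and Lemma~\ref{lem:rcfz} yields $(D,w)=\Psi_{FZ}(\pi)=\Psi_{FZ}(\pi^{rc})=(D,w)^{rc}$, so $(D,w)$ is fixed by the rc-involution. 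Finally, retaining the first $n$ steps produces a ballot path $P$ (a prefix of a Dyck path stays weakly above the $x$-axis), and for $1\le i\le n$ one has $h_i(P)=h_i(D)$, so the Laguerre bound on $w_i$ is precisely the defining bound of $\P_n$. Hence $(P,w_1\cdots w_n)\in\P_n$ and $\Psi$ is well defined.

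For bijectivity I would exhibit the inverse explicitly. Given $(P,w)\in\P_n$, reconstruct a full history $(D,\widehat w)$ by reverse-and-flip symmetry: set $m_{2n+1-i}$ to be the swap of $m_i$ (exchanging $U$ and $D$), and force the second-half weights by the requirement $(D,\widehat w)^{rc}=(D,\widehat w)$. The crux is to verify that this completion again lies in $\L_{2n}$. A short induction shows the height profile is symmetric, $h_{2n+2-i}(D)=h_i(D)$, whence the completed path returns to the $x$-axis and stays in $\N^2$, so $D$ is a genuine Dyck path. The same identity, combined with $h_{2n+1-i}(D)=h_{i+1}(D)=h_i(D)\pm1$ according as $m_i=U$ or $m_i=D$, transforms the bound $0\le w_i\le h_i(D)$ (resp.\ $h_i(D)-1$) into exactly the admissible range for the mirror weight $\widehat w_{2n+1-i}$, so $(D,\widehat w)\in\L_{2n}$. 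Applying $\Psi_{FZ}^{-1}$ then returns a permutation that is alternating (no horizontal steps) and rc-invariant (by the symmetry of $(D,\widehat w)$ via Lemma~\ref{lem:rcfz}), hence a member of $\R_n$; this construction undoes $\Psi$, so $\Psi$ is a bijection and the theorem follows.

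The main obstacle I anticipate is this last verification, namely that the rc-forced second-half weights automatically land in their admissible ranges. Here the asymmetry of the Laguerre bound (upper limit $h_i$ for an up step but $h_i-1$ for a down step) must be reconciled with the height identity at mirror positions, and getting the two off-by-one adjustments to cancel is the one place where a genuine computation, resting on the identity of Lemma~\ref{lem:cor}, is required rather than a purely formal symmetry argument. Everything else—the absence of horizontal steps, the rc-fixedness of $(D,w)$, and the local nature of the weight constraints—follows directly from the alternating and rc-invariance hypotheses together with Lemmas~\ref{lem:cor} and~\ref{lem:rcfz}.
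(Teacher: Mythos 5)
Your proposal is correct and follows essentially the same route as the paper: reduce to the bijectivity of $\Psi$ via Theorem~\ref{thm:psi}, note that the alternating property forbids horizontal steps so $\Psi_{FZ}(\pi)$ is a weighted Dyck path, use Lemma~\ref{lem:rcfz} to get rc-symmetry of $(D,w)$, and invert by completing a labeled ballot path to its unique symmetric Laguerre history. Your only slight misstatement is attributing the final weight-range cancellation to Lemma~\ref{lem:cor}; that check is pure arithmetic from the mirror height identity $h_{2n+1-i}(D)=h_i(D)\pm1$ (Lemma~\ref{lem:cor} is used only inside the proof of Lemma~\ref{lem:rcfz}), and in fact you verify in detail a step the paper leaves implicit.
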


 For example, if $\pi=2\bar1547\bar6\bar3$, then $\psi(\pi)=5\,2\,14\,11\,12\,7\,9\,6\,8\,3\,4\,1\,13\,10$. After applying $\Psi$ we get the labeled ballot path $(UUUDDUU,0012000)$. 

\section{Concluding remarks}

It should be mentioned that a  bijection different with $\psi\circ\Phi$ between $\W_n$ and $\R_n$ was found in~\cite{Chen}.  Josuat-Verg\`es~\cite{JM} established  another  bijection between $\Sk_n$ and $\P_n$ which does not seem to be  directly related with that one in~\cite{CFJ}. However, it turns out accidentally that our bijection $\Psi\circ\psi$ in Theorem~\ref{thm:chen} is closely related with that one in~\cite{CFJ}, though constructed quite differently. In fact, under the one-to-one correspondence $(P,w)\mapsto(P,\bar w)$ on $\P_n$ with
$$
\bar w_{i}:=
\begin{cases}
h_i(P)-w_i, &\quad\text{if $p_i=U$,}\\
h_i(P)-1-w_i, &\quad\text{if $p_i=D$,}
\end{cases}
$$ our bijection $\Psi\circ\psi$ is the same as that one  between $\Sk_n$ and $\P_n$ introduced in~\cite{CFJ}.

\section*{Acknowledgement} 

S. Chen was partially supported by the NSFC grant (No.~12271511) and the CAS Funds of the Youth Innovation Promotion Association (No.~Y2022001).
Y. Li and Z. Lin were supported by the NSFC grants (Nos.~12271301 \& 12322115) and the Fundamental Research Funds for the Central Universities. S.H.F. Yan was supported by the NSFC grants (Nos.~12471318 \& 12071440).

\end{document}